\newtheorem{theorem}{Theorem}
\newtheorem{lemma}[theorem]{Lemma}
\newtheorem{proposition}[theorem]{Proposition}
\theoremstyle{definition}
\newcounter{boldSectionCounter}
\newcounter{boldSubsectionCounter}
\newcommand{\boldSubsection}[1]{
	 \addtocounter{boldSectionCounter}{-1}
   \noindent {\bfseries{\scshape \arabic{boldSectionCounter}.\arabic{boldSubsectionCounter}. #1}}\\[6pt]
   \stepcounter{boldSubsectionCounter}
   \addtocounter{boldSectionCounter}{1}
}		
\newcommand{\boldSection}[1]{
   \large\begin{center}\noindent {\bfseries{\scshape \S \arabic{boldSectionCounter} #1}}\\[6pt]\end{center}\normalsize
   \stepcounter{boldSectionCounter}
	 \setcounter{boldSubsectionCounter}{1}
}	
\newcommand\restr[2]{{
  \left.\kern-\nulldelimiterspace 
  #1 
  \vphantom{\big|} 
  \right|_{#2} 
}}	
\newcommand\blfootnote[1]{%
  \begingroup
  \renewcommand\thefootnote{}\footnote{#1}%
  \addtocounter{footnote}{-1}%
  \endgroup
}
\newcommand*\bcdot{\mathpalette\bigcdot@{0.5}}
\newcommand*\bigcdot@[2]{\mathbin{\vcenter{\hbox{\scalebox{#2}{$\m@th#1\bullet$}}}}}
\def\blfootnote{\gdef\@thefnmark{}\@footnotetext}
\begin{document}
\begin{center}\Large\noindent{\bfseries{\scshape An improved upper bound for the grid Ramsey problem}}\\[12pt]
\normalsize\noindent{\scshape Luka Mili\'cevi\'c}\\[24pt]\end{center}

\footnotesize
\begin{center}\sc{\textbf{Abstract}}\end{center}

\begin{changemargin}{1in}{1in}
\hspace{12pt}~For a positive integer $r$, let $G(r)$ be the smallest $N$ such that, whenever the edges of the Cartesian product $K_N \times K_N$ are $r$-coloured, then there is a rectangle in which both pairs of opposite edges receive the same colour. In this paper, we improve the upper bounds on $G(r)$ by proving $G(r) \leq \Big(1 - \frac{1}{128}r^{-2}\Big) r^{\binom{r+1}{2}}$, for $r$ large enough. Unlike the previous improvements, which were based on bounds for the size of set systems with restricted intersection sizes, our proof is a form of a quasirandomness argument. 
\end{changemargin} 

\normalsize

\boldSection{Introduction}
\hspace{12pt}~This paper is concerned with the following question, known as the grid Ramsey Problem. For a positive integer $r$, let $G(r)$ be the smallest $N$ such that, whenever an $r$-edge-colouring $\chi$ of the Cartesian product $K_N \times K_N$ is given, then there is a rectangle in which both pairs of opposite edges receive the same colour, i.e. we may find $(i, j, i', j')$ such that $\chi\Big((i,j)(i',j)\Big) = \chi\Big((i,j')(i',j')\Big)$ and $\chi\Big((i,j)(i,j')\Big) = \chi\Big((i',j)(i',j')\Big)$.\footnote{The notation $\chi\Big((i,j)(i',j)\Big)$ may seem unusual at first, but the vertices of the graph $K_N \times K_N$ are ordered pairs and if we follow the standard convention of writing $uv$ for edge between vertices $u$ and $v$ in a graph, then we end up with an expression like $\chi\Big((i,j)(i',j)\Big)$.} The grid Ramsey problem is to determine $G(r)$. In addition to being interesting in its own right, this function is related to other topics in Ramsey theory. In fact, the trivial bound of $G(r) \leq r^{\binom{r+1}{2}} + 1$ is the simplest form of a crucial ingredient of Shelah's celebrated proof of primitive recursive bounds in Hales-Jewett theorem~\cite{Shelah}. Motivated by this, Graham, Rothschild and Spencer~\cite{GRS} raised the question of whether $G(r)$ can be bounded by a polynomial in $r$. In~\cite{gridRamsey}, Conlon, Fox, Lee and Sudakov answered their question in negative, by constructing an $r$-edge-colouring of $K_N \times K_N$ without alternating rectangles, where $N \geq 2^{\Omega\big((\log r)^{5/2}/\sqrt{\log \log r}\big)}$. Moreover, they constructed (Theorem 1.3 (iii) of~\cite{gridRamsey}) an $r$-edge-colouring of $K_M \times K_N$ without alternating rectangles, where $M$ was quadratic in $r$, and $N$ was close to $r^{\binom{r+1}{2}}$ (in fact $N \geq r^{\big(1 - (\log r)^{-1}\big)r^2/2}$), which to some extend indicates the difficulty of improving the upper bounds on $G(r)$.\\
\indent When it comes to upper bounds, the Shelah's original bound is $G(r) \leq r^{\binom{r+1}{2}} + 1$. That argument actually uses very little of the structure present in the problem, and we include it here since it is very short and it is our starting point in the proof. Namely, we simply pick $r+1$ favourite rows $j_1, \dots, j_{r+1}$ and for each column $i$, restrict $\chi$ to the complete graph on the vertices $(i, j_1), \dots, (i,j_{r+1})$. Since there are $r^{\binom{r+1}{2}}$ $r$-edge-colourings of $K_{r+1}$, by pigeonhole principle, there are two columns $i$ and $i'$ whose colourings of the fixed $K_{r+1}$ agree. Finally, consider $r+1$ horizontal edges $(i,j_l)(i',j_l)$ for $l \in [r+1]$. There are two that have the same colour, and determine the desired rectangle.\\
\indent The only improvements so far are due to Gy\'arf\'as~\cite{Gyarfas} who showed that
\[G(r) \leq r^{\binom{r+1}{2}} - r^{\binom{r-1}{2}} + 1 = \Big(1 - r^{-(2r - 1)}(1 - o(1))\Big)r^{\binom{r+1}{2}}\]
and a very recent one due to Corsten~\cite{Corsten}, 
\[G(r) \leq r^{\binom{r+1}{2}} - \Big(\frac{1}{4} - o(1)\Big)r^{\binom{r}{2}} = \Big(1 - \frac{1}{4}r^{-r}(1 - o(1))\Big)r^{\binom{r+1}{2}}.\]
Our main result is
\begin{theorem}\label{mainThm}For large enough $r$, we have 
\[G(r) \leq \Big(1 - \frac{1}{128}r^{-2}(1  - o(1))\Big)r^{\binom{r+1}{2}}.\]
\end{theorem}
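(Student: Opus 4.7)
The plan is to build on Shelah's pigeonhole argument. Fix $r+1$ rows $j_1, \dots, j_{r+1}$, and for each column $i \in [N]$ let the \emph{type} $p(i) \in [r]^{\binom{r+1}{2}}$ record the vertical edge colors $\chi((i, j_l)(i, j_{l'}))$ on the induced $K_{r+1}$. Writing $T = r^{\binom{r+1}{2}}$, if two columns share a type, the standard pigeonhole over the $r+1$ horizontal edges and $r$ colors produces an alternating rectangle; so I may assume $p$ is injective, which alone yields $N \leq T$. The task is to strengthen this to $T - N \geq \frac{T}{128\, r^2}(1 - o(1))$.

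The refined structural observation is that if no alternating rectangle exists, then for every pair of distinct columns $(i, i')$ the tuple $H(i, i') = \bigl(\chi((i, j_l)(i', j_l))\bigr)_{l \in [r+1]} \in [r]^{r+1}$ must be a proper $r$-coloring of the agreement graph $A(p(i), p(i')) \subseteq K_{r+1}$ on the index set $[r+1]$, whose edges are the pairs $(l, l')$ where $p(i)$ and $p(i')$ coincide. Pigeonhole forces some $(l, l')$ with equal entries in $H$, and rectangle-freeness then forces the two types to disagree on that particular edge.

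I would then split into two cases based on quasirandomness of the vertical coloring restricted to the chosen rows. \emph{Non-quasirandom case:} if some color class $C_c^{l, l'} = \{\, i \in [N] : \chi((i, j_l)(i, j_{l'})) = c \,\}$ has size substantially exceeding $N/r$, pass to $C_c^{l, l'}$, a large set of columns whose types already agree on one vertical edge. In this subproblem the effective type has one fewer free coordinate, and a routine iteration recovers a matching-type pair. \emph{Quasirandom case:} all class sizes are within $o(N/r)$ of $N/r$, so the types are nearly equidistributed in $[T]$. Here a Cauchy--Schwarz / second-moment argument, applied to the matching-color indicator for horizontal edges on pairs of columns and summed over the $\binom{r+1}{2}$ row pairs, combined with the fact that the no-rectangle condition requires the set $\binom{[N]}{2}$ of column pairs to be covered by events of the form ``$H$ matches at $(l, l')$ and the types disagree at $(l, l')$,'' forces $T - N = \Omega(T / r^2)$.

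The hardest part will be matching the constants precisely. The factor $r^{-2}$ is traceable to two simultaneous $r$-pigeonholes---one on the $r+1$ horizontal edges between a column pair, and one on the $r$ color classes at each fixed vertical edge---and pinning the constant down to the explicit $1/128$ demands tight bookkeeping in both the Cauchy--Schwarz estimate and the inductive descent, together with ensuring the two cases combine without further loss.
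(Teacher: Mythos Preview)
Your structural observation and your non-quasirandom branch line up with the paper: the intersection graph $H_{j_l}\cap H_{j_{l'}}$ on the column set is $r$-partite with parts the vertical colour classes at $(j_l,j_{l'})$, and an oversized class is precisely one outcome of the paper's dichotomy (Proposition~\ref{dichotomy}). The gap is your quasirandom case. First, balanced one-coordinate marginals do not make the types ``nearly equidistributed in $[T]$''; that implication simply fails. Second, the covering constraint you invoke---every column pair admits some $(l,l')$ where $H$ matches and the types disagree---carries no information beyond injectivity of types: any proper subgraph of $K_{r+1}$ is $r$-colourable, so for any two distinct types the adversary can always supply a proper colouring $H(i,i')$. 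The only real leverage is that the tuples $H(i,i')$ are not free but come from $r+1$ global edge-colourings of $K_N$; yet you write down no inequality, and the natural reading of your sentence---sum the matching indicator over row pairs and column pairs---yields only $\sum_{l<l'} e(H_{j_l}\cap H_{j_{l'}})\ge\binom{N}{2}$, which is weaker than the trivial convexity bound that the average intersection density is at least $1/r$. I do not see a one-shot second-moment argument that extracts a saving of order $T/r^{2}$ here.

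The paper's handling of the balanced case is where the substance lies and is absent from your sketch. Lemma~\ref{nonRandom} proves that an $r$-partite graph with nearly equal parts and density $\delta$ satisfies $\hom(C_4,G)\ge\bigl(1+(r-1)^{-3}\bigr)(1-o(1))\,\delta^4N^4$: balanced $r$-partite graphs have a definite $C_4$-excess over the quasirandom count. Averaged over row pairs, this forces some edge-coloured horizontal $C_4$ pattern to occur in a $(1+\Omega(r^{-3}))r^{-4}$ fraction of rows, and one restricts to that row subset---this is the \emph{other} branch of the dichotomy, not a terminal step. Each application gains only a factor $1+\Theta(r^{-3})$; the $r^{-2}$ in the theorem comes from iterating the dichotomy $\Theta(r)$ times, alternately shrinking the column set and the row set, and only then finishing with Shelah's pigeonhole. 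By committing to exactly $r+1$ rows at the outset you have thrown away the room to iterate on the row side (indeed the error control in Proposition~\ref{dichotomy} already requires the current row set to have size at least $r^{5}$), so even if you imported the $C_4$ lemma, your framework as written would not support the argument.
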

Unlike the arguments of Gy\'arf\'as and Corsten, who used Fisher's and Ray-Chaudhuri-Wilson theorems that bound the size of a set system with restrictions on intersections, our proof is based on quasirandomness. Informally speaking, a graph is quasirandom if it contains roughly the same number of cycles of length 4 as a random graph of the same density. The reason why such graphs are called quasirandom lies in fact that this requirement forces a graph to behave like a random one, e.g. to have roughly the same number of triangles as a random graph of the same density would, etc. This notion was introduced by Thomason~\cite{ThomasonQR}, and by Chung, Graham and Wilson~\cite{CGW}. We note that quasirandomness has been playing a role in Ramsey theory as well. For example, the best known bounds on the diagonal Ramsey numbers, due to Conlon~\cite{ConlonRamsey}, are proved using such ideas, and Conlon's proof is in fact based on an earlier argument of Thomason~\cite{ThomasonRamsey}, which also exploits quasirandomness.\\

\noindent\textbf{Acknowledgements.} I acknowledge the support of the Ministry of Education, Science and Technological Development of the Republic of Serbia, Grant III044006.
\boldSection{Proof of Theorem~\ref{mainThm}}
\boldSubsection{Overview of the argument}
Suppose that $N$ is very slightly smaller $r^{\binom{r+1}{2}}$ and suppose that $\chi$ is a given $r$-edge-colouring of $K_N \times K_N$ without alternating rectangles. The Shelah's argument above tells us that every two rows, viewed as $r$-edge-colourings of $K_N$ must not coincide on $K_{r+1}$. However, since $N$ is very close to $r^{\binom{r+1}{2}}$ that means that, since we do not have $K_{r+1}$ which receives the same colours in two rows, then for any smaller graph $H$ with its own $r$-edge-colouring $c$, we must get roughly $r^{-e(H)}$ rows in which $H$ receives exactly the colouring $c$. In particular, if we define the intersection of two rows to be the graph with the edges set consisting of edges that are of same colour in both rows, then the intersection graphs are quasirandom. On the other hand, looking at the vertical edges, similar argument allows us to deduce that the intersection graphs of pairs of rows are $r$-partite with all vertex classes of size close to $N/r$. We reach a contradiction by showing that $r$-partite graphs with almost equal vertex classes cannot be very quasirandom. We are now ready to proceed with the proof.\\[12pt]
\boldSubsection{Proof}
Let $\chi$ be an arbitrary colouring of the grid with $r$ colours. For each $j \in [N]$, we denote by $H_j$ the complete graph on the vertex set $[N]$ along with an $r$-edge-colouring of the $j$\textsuperscript{th} row by $\chi$, i.e.
\[H_j = \Big\{(xy, \kappa) \in [N]^{(2)} \times [r] \colon \chi\Big((x,j)(y,j)\Big) = \kappa\Big\}.\] 
We misuse the notation slightly, and we define $H_{j_1} \cap H_{j_2}$ as
\[H_{j_1} \cap H_{j_2} = \Big\{xy \in [N]^{(2)} \colon (\exists \kappa \in [r]) (xy, \kappa) \in H_{j_1}\text{ and }(xy, \kappa) \in H_{j_2}\Big\},\]
thus the notation $H_{j_1} \cap H_{j_2}$ in our sense is exactly the projection of the usual $H_{j_1} \cap H_{j_2}$ under the mapping $(xy, \kappa) \mapsto xy$. Thus, as noted in~\cite{gridRamsey}, the condition that $\chi$ has no alternating rectangles is exactly the same as all graphs $H_{j_1} \cap H_{j_2}$ on the vertex set $[N]$ having chromatic number at most $r$. The following lemma shows that such graphs cannot be too quasirandom. For a graph $G$, we write $\hom(C_4, G)$ for the number of homomorphisms from $C_4$ to $G$, i.e. the number of $(x,y,z,w) \in V(G)^4$ such that $xy, yz, zw, wx \in E(G)$. This is very similar to counting copies of $C_4$ inside $G$, except that we allow repetitions of vertices, and the order matters, which is helpful when performing calculations.

\begin{lemma}\label{nonRandom}Let $G$ be a graph of order $N$. Suppose that $G$ is $k$-partite with vertex classes of size at most $(1 + \epsilon) \frac{N}{k}$ for some $\epsilon \in [0,1]$. Let $\delta = 2e(G)/N^2$ be the density of $G$. Then we have
\[\hom(C_4, G) \geq (1 - 4\epsilon)\Big(1 + \frac{1}{(k-1)^3}\Big)\delta^4 N^4.\]\end{lemma}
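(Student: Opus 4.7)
My plan is a two-level Cauchy--Schwarz argument: first pass from the vertex-level $C_4$-count to the squared Frobenius norm of a $k \times k$ positive semidefinite matrix $M$ by block Cauchy--Schwarz, and then extract the factor $1 + (k-1)^{-3}$ by combining a Rayleigh-quotient lower bound on $\lambda_{\max}(M)$ with a separate lower bound on $\operatorname{tr}(M)$.

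Let $V_1, \dots, V_k$ be the parts of $G$, set $n_i = |V_i|$, and for $y \in V(G)$ and $i \in [k]$ let $d_i(y) = |N(y) \cap V_i|$. Define $f_i \in \mathbb{R}^N$ by $f_i(y) = d_i(y)/\sqrt{n_i}$, let $F \in \mathbb{R}^{N \times k}$ be the matrix with columns $f_1, \dots, f_k$, and set $M = F^T F$. The standard identity $\hom(C_4, G) = \operatorname{tr}(A^4) = \sum_{x, z}(A^2)_{xz}^{2}$ together with $\sum_{x \in V_i,\, z \in V_j}(A^2)_{xz} = \sum_y d_i(y)\,d_j(y)$ lets me apply Cauchy--Schwarz in each block $V_i \times V_j$ to obtain
\[
\hom(C_4, G) \;\geq\; \sum_{i, j} \frac{1}{n_i n_j}\Big(\sum_y d_i(y)\,d_j(y)\Big)^{2} \;=\; \sum_{i, j} \langle f_i, f_j\rangle^{2} \;=\; \|M\|_F^{2}.
\]

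Next I lower bound $\lambda_{\max}(M)$ and $\operatorname{tr}(M)$. With the test vector $v = (\sqrt{n_1}, \dots, \sqrt{n_k})$, the identity $(Fv)_y = \sum_i d_i(y) = \deg(y)$ together with $\sum_y \deg(y)^{2} \ge (2e(G))^{2}/N$ gives $\lambda_{\max}(M) \ge v^T M v / \|v\|^{2} \ge \delta^{2} N^{2}$. For the trace, Cauchy--Schwarz on the $N - n_i$ vertices outside $V_i$ yields $\sum_y d_i(y)^{2} \ge \Delta_i^{2}/(N - n_i)$ with $\Delta_i := \sum_{j \ne i} e(V_i, V_j)$; combined with the bound $n_i(N - n_i) \le (1 + \epsilon)(k - 1 - \epsilon)N^{2}/k^{2}$ (which follows from $n_i \le (1+\epsilon)N/k$) and $\sum_i \Delta_i^{2} \ge (2e(G))^{2}/k$, I obtain
\[
\operatorname{tr}(M) \;=\; \sum_i \frac{1}{n_i}\sum_y d_i(y)^{2} \;\ge\; \frac{k\,\delta^{2} N^{2}}{(k-1-\epsilon)(1+\epsilon)}.
\]
Both of these estimates, together with the block Cauchy--Schwarz step, are tight when $G$ is the complete balanced $k$-partite graph, which is reassuring.

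Finally, since $M$ is a $k \times k$ positive semidefinite matrix with eigenvalues $\mu_1 \ge \dots \ge \mu_k \ge 0$, the power-mean inequality applied to the bottom $k - 1$ eigenvalues gives $\|M\|_F^{2} \ge \mu_1^{2} + (\operatorname{tr}(M) - \mu_1)^{2}/(k - 1)$. In the regime $\epsilon \le 1/(k-1)$ the two lower bounds above satisfy $\delta^2 N^2 \le k\delta^2 N^2/((k-1-\epsilon)(1+\epsilon))$, and a short optimization of $\mu_1^2 + (T - \mu_1)^2/(k-1)$ on the convex feasible region $\{\mu_1 \ge \delta^2 N^2,\, T \ge k\delta^2 N^2/((k-1-\epsilon)(1+\epsilon)),\, \mu_1 \le T\}$ shows the minimum occurs at the corner, yielding
\[
\|M\|_F^{2} \;\ge\; \delta^{4} N^{4}\Big(1 + \frac{(1 - (k-1)\epsilon)^{2}}{(k-1)^{3}(1+\epsilon)^{2}}\Big).
\]
For $\epsilon > 1/(k-1)$ the naive bound $\|M\|_F^{2} \ge \mu_1^{2} \ge \delta^{4} N^{4}$ already suffices, since $(1 - 4\epsilon)(1 + (k-1)^{-3}) \le 1$ in that range. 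The main remaining obstacle is purely algebraic: I have to verify that the first bound implies the target $(1 - 4\epsilon)(1 + (k-1)^{-3})\delta^{4} N^{4}$, which reduces to an elementary inequality in $\epsilon$ with parameter $k$ that can be checked by expansion.
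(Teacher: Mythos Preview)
Your argument is correct (with two minor loose ends noted below) and shares its opening move with the paper: both apply Cauchy--Schwarz block by block to obtain $\hom(C_4,G)\ge \|M\|_F^2$ for the $k\times k$ Gram matrix $M=F^TF$. From there the two proofs diverge. The paper works with the entrywise pair $(S,T)=(\operatorname{tr}M,\ \mathbf{1}^TM\mathbf{1})$: a second Cauchy--Schwarz on diagonal and off-diagonal entries gives $\|M\|_F^2\ge \tfrac{1}{k}S^2+\tfrac{1}{k(k-1)}(T-S)^2$, a further Cauchy--Schwarz yields $S\ge T/(k-1)$, and finally $T\ge k\delta^2N^2/(1+\epsilon)^2$; the constant $1+(k-1)^{-3}$ drops out exactly from the one-variable quadratic optimization in $S$, and the $\epsilon$-loss enters only at the very end via $(1+\epsilon)^{-4}\ge 1-4\epsilon$. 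You instead use the spectral pair $(\lambda_{\max}(M),\operatorname{tr}M)$, bound each separately, and invoke $\|M\|_F^2\ge \mu_1^2+(\operatorname{tr}M-\mu_1)^2/(k-1)$. The paper's parameterisation is a bit cleaner precisely because the $\epsilon$-dependence is isolated; your route mixes $\epsilon$ into both scalar bounds and therefore needs the deferred elementary inequality at the end (which does hold --- it reduces to $4(k-1)^3(1+\epsilon)^2\ge 2(k-2)-(k^2-2k+8)\epsilon-4\epsilon^2$).

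Two small caveats. First, your bound $n_i(N-n_i)\le (1+\epsilon)(k-1-\epsilon)N^2/k^2$ requires $(1+\epsilon)N/k\le N/2$; this fails for $k=2$ with $\epsilon>0$ (irrelevant to the application, where $k=r$ is large, and easily patched). Second, the claim that the minimum of $\mu_1^2+(T-\mu_1)^2/(k-1)$ over your feasible region occurs at the corner $(L_1,L_2)$ implicitly uses $L_1\ge L_2/k$, i.e.\ $(k-1-\epsilon)(1+\epsilon)\ge 1$, since otherwise the minimum along $T=L_2$ would sit at $\mu_1=L_2/k<L_1$; this condition does hold throughout your regime $\epsilon\le 1/(k-1)$, but deserves a line of justification.
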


\begin{proof}Let $V$ be the set of vertices of the graph and let $V_1, \dots, V_k$ be the vertex classes. Let $\delta_{ij}$ be the density of the bipartite graph on vertex classes $V_i$ and $V_j$, i.e.\ $\delta_{i,j} = \frac{e(V_i, V_j)}{|V_i||V_j|}$. For vertices $x$ and $y$ we write $d_{x,y}$ for the \emph{codegree} of $x$ and $y$, which is the number of common neighbours. (If $x = y$, then $d_{x,y} = d_x$.) We have
\begin{equation}\label{firstIneq}\begin{split}\hom(C_4, G) = \sum_{x,y \in V} d_{x,y}^2 &= \sum_{i\in [k]} \sum_{x,y \in V_i} d_{x,y}^2 + \sum_{\substack{i,j \in [k] \\i \not= j}} \sum_{x \in V_i, y \in V_j} d_{x,y}^2\\
& \geq \sum_{i \in [k]} \frac{1}{|V_i|^2}\Big(\sum_{x,y \in V_i} d_{x,y}\Big)^2 + \sum_{i \not= j} \frac{1}{|V_i||V_j|}\Big(\sum_{x \in V_i, y \in V_j} d_{x,y}\Big)^2\\
& = \sum_{i \in [k]} \Big(\sum_{x,y \in V_i} \frac{d_{x,y}}{\sqrt{|V_i| \cdot |V_i|}}\Big)^2 + \sum_{\substack{i,j \in [k] \\i \not= j}} \Big(\sum_{x \in V_i, y \in V_j} \frac{d_{x,y}}{\sqrt{|V_i| \cdot |V_j|}}\Big)^2\\
& \geq \frac{1}{k}\Big(\sum_{i\in [k]} \sum_{x,y \in V_i} \frac{d_{x,y}}{\sqrt{|V_i| \cdot |V_i|}}\Big)^2 + \frac{1}{k(k-1)}\Big(\sum_{\substack{i,j \in [k] \\i \not= j}} \sum_{x \in V_i, y \in V_j} \frac{d_{x,y}}{\sqrt{|V_i| \cdot |V_j|}}\Big)^2\\
\end{split}\end{equation}
Let
\[T = \sum_{i,j \in [k]} \sum_{x \in V_i, y \in V_j} \frac{d_{x,y}}{\sqrt{|V_i| \cdot |V_j|}}\]
and
\[S = \sum_{i\in [k]} \sum_{x,y \in V_i} \frac{d_{x,y}}{\sqrt{|V_i| \cdot |V_i|}}.\] 
Write $d^i_z$ to be the number of edges between a vertex $z$ and class $V_i$. We thus have
\[\begin{split}S = \sum_{i\in [k]} \sum_{x,y \in V_i} \frac{d_{x,y}}{\sqrt{|V_i| \cdot |V_i|}} =& \sum_{\substack{i,j \in [k] \\i \not= j}} \sum_{z \in V_j} \Big(\frac{{d^i_z}}{\sqrt{|V_i|}}\Big)^2 =  \sum_{j \in [k]}\sum_{z \in V_j} \sum_{\substack{i \in [k] \\i \not= j}} \Big(\frac{{d^i_z}}{\sqrt{|V_i|}}\Big)^2\\
\geq& \frac{1}{k-1}\sum_{j \in [k]} \sum_{z \in V_j} \Big(\sum_{\substack{i \in [k]\\i \not= j}} \frac{{d^i_z}}{\sqrt{|V_i|}}\Big)^2\\
= &\frac{1}{k-1}\sum_{j \in [k]}\sum_{z \in V_j} \sum_{\substack{i, i' \in [k]\\i, i' \not= j}} \frac{d^i_z d^{i'}_z}{\sqrt{|V_i| |V_{i'}|}}\\
= &\frac{1}{k-1} \sum_{i, i' \in [k]}  \sum_{z \in V} \frac{d^i_z d^{i'}_z}{\sqrt{|V_i| |V_{i'}|}}\\
= &\frac{1}{k-1} \sum_{i, i' \in [k]}  \sum_{x \in V_i, y \in V_{i'}}\frac{d_{x,y}}{\sqrt{|V_i| |V_{i'}|}} = \frac{1}{k-1}T.\end{split}\]
Using the elementary fact that, for a fixed $T$, the quadratic function $x \mapsto x^2 - \frac{2}{k} Tx + \frac{1}{k}T^2$ is increasing on $[\frac{1}{k}T, \infty)$, the inequality~(\ref{firstIneq}) becomes (since $T \geq 0$ and $S \geq \frac{1}{k-1}T$)
\[\begin{split}\hom(C_4, G) &\geq \frac{1}{k} S^2 + \frac{1}{k(k-1)} (T-S)^2 = \frac{1}{k-1} S^2 - \frac{2}{k(k-1)} TS + \frac{1}{k(k-1)} T^2\\
&= \frac{1}{k-1} \bigg(S^2 - \frac{2}{k} TS + \frac{1}{k} T^2 \bigg) \geq \frac{1}{k-1} \bigg(\Big(\frac{T}{k-1}\Big)^2 - \frac{2}{k} T \Big(\frac{T}{k-1}\Big) + \frac{1}{k} T^2\bigg)\\
&= \frac{1}{k^2} \Big(1 + \frac{1}{(k-1)^3}\Big) T^2.
\end{split}\]
But 
\[\begin{split}T &= \sum_{i,i' \in [k]} \sum_{x \in V_i, y \in V_{i'}} \frac{d_{x,y}}{\sqrt{|V_i||V_{i'}|}} = \sum_{z \in V} \Big(\sum_{i, i' \in [k]} \frac{d^i_z d^{i'}_z}{\sqrt{|V_i||V_{i'}|}}\Big)\\
&=\sum_{j \in [k]} \sum_{z \in V_j} \Big(\sum_{\substack{i \in [k]\\i \not= j}} \frac{{d^i_z}}{\sqrt{|V_i|}}\Big)^2 \geq \sum_{j \in [k]} \frac{1}{|V_j|} \Big(\sum_{z \in V_j} \sum_{\substack{i \in [k]\\i \not= j}} \frac{{d^i_z}}{\sqrt{|V_i|}}\Big)^2 \\
&= \sum_{j \in [k]} \Big(\sum_{z \in V_j} \sum_{\substack{i \in [k]\\i \not= j}} \frac{{d^i_z}}{\sqrt{|V_i| \cdot |V_j|}}\Big)^2 \geq \frac{1}{k}\Big(\sum_{j \in [k]} \sum_{z \in V_j} \sum_{\substack{i \in [k]\\i \not= j}} \frac{{d^i_z}}{\sqrt{|V_i| \cdot |V_j|}}\Big)^2\\
&\geq \frac{1}{k}\frac{k^2}{(1+\epsilon)^2N^2}\Big(\sum_{j \in [k]} \sum_{z \in V_j} \sum_{\substack{i \in [k]\\i \not= j}} {d^i_z}\Big)^2 = \frac{k \delta^2 N^2}{(1+\epsilon)^2}.\end{split}\]
Thus, (using the elementary fact that $(1+\epsilon)^{-4} \geq (1-\epsilon)^4\geq 1-4\epsilon$ for $\epsilon \in [0,1]$) the number of homomorphisms of $C_4$ to $G$ is
\[\hom(C_4, G) \geq (1 - 4\epsilon)\Big(1 + \frac{1}{(k-1)^3}\Big)\delta^4 N^4,\]
as required.\end{proof}

For the next proposition, we need to generalize our notation slightly. For a set $A$, we write $K_A$ for the complete graph with the vertex set $A$. For example, the usual notation $K_N$ could be written as $K_{[N]}$ instead.

\begin{proposition}\label{dichotomy}Let $A, B \subset [N]$ with $|B| \geq r^5$. Suppose that $\chi \colon E(K_A \times K_B) \to [r]$ is an $r$-edge-colouring without alternating rectangles. Then, there is an edge-coloured $C_4$ $\Big((a_1a_2, \kappa_1),$ $(a_2a_3, \kappa_2),$ $(a_3a_4, \kappa_3),$ $(a_4a_1, \kappa_4)\Big)$ such that for at least $\Big(1 + \frac{1}{4}r^{-3}\Big)\frac{1}{r^4}|B|$ of $b \in B$ $b$\textsuperscript{th} row contains this pattern, or there is a coloured edge $(b_1b_2, \kappa)$ such that for at least $\Big(1 + \frac{1}{8}r^{-3} + O(r^{-4})\Big)\frac{1}{r}|A|$ of $a \in A$ $a$\textsuperscript{th} column contain this pattern.\end{proposition}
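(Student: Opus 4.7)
The plan is to argue by contradiction: assume that neither conclusion holds, and derive a quantitative inconsistency by double-counting the quantity
\[\Sigma := \sum_{(b_1, b_2) \in B^2} \hom(C_4, H_{b_1} \cap H_{b_2}).\]
The failure of conclusion (2) will feed Lemma~\ref{nonRandom} to give a lower bound on $\Sigma$, while the failure of conclusion (1) will give an upper bound via a Cauchy--Schwarz argument; the two will be incompatible for $r$ large.

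\textbf{Setup and lower bound.} Write $d_{b_1 b_2, a} := \chi\big((a,b_1)(a,b_2)\big)$. The absence of alternating rectangles forces $a \mapsto d_{b_1 b_2, a}$ to be a proper $r$-colouring of $H_{b_1} \cap H_{b_2}$, so this graph is $r$-partite with the class of colour $\kappa$ equal to the set of columns $a \in A$ containing the coloured edge $(b_1 b_2, \kappa)$. If (2) fails, every such class has size less than $(1 + \epsilon')|A|/r$, where $\epsilon' := \tfrac{1}{8} r^{-3} + O(r^{-4})$, so Lemma~\ref{nonRandom} applies to every $H_{b_1} \cap H_{b_2}$ with $k = r$. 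Summing its bound over $(b_1, b_2) \in B^2$ and applying Jensen's inequality with $x \mapsto x^4$ to the densities $\delta_{b_1 b_2}$ yields
\[\Sigma \ge (1 - 4\epsilon')\Big(1 + \frac{1}{(r-1)^3}\Big) |A|^4 |B|^2 \bar{\delta}^4,\]
where $\bar{\delta}$ is the average of $\delta_{b_1 b_2}$ over $(b_1,b_2)\in B^2$. Expanding $\bar{\delta}$ by interchanging summations and applying Cauchy--Schwarz in the form $\sum_\kappa M_{aa', \kappa}^2 \ge |B|^2/r$, where $M_{aa', \kappa}$ counts the rows in which the edge $aa'$ receives colour $\kappa$, gives $\bar{\delta} \ge (1 - O(|A|^{-1}))/r$.

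\textbf{Upper bound and conclusion.} Interchanging summations I also have
\[\Sigma = \sum_{(x,y,z,w) \in A^4} \sum_{\kappa \in [r]^4} n^2_{(x,y,z,w), \kappa},\]
where $n_{(x,y,z,w), \kappa}$ counts rows $b \in B$ in which the coloured cycle with edges $(xy,\kappa_1), (yz,\kappa_2), (zw,\kappa_3), (wx,\kappa_4)$ appears. Degenerate tuples contribute zero; for the remaining ones, $\sum_\kappa n_{(x,y,z,w),\kappa} = |B|$, so the failure of (1) forces $n_{(x,y,z,w),\kappa} < (1 + \tfrac{1}{4}r^{-3}) |B|/r^4$, and hence $\sum_\kappa n^2_{(x,y,z,w), \kappa} \le (1 + \tfrac{1}{4}r^{-3}) |B|^2/r^4$. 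Bounding the number of non-degenerate 4-tuples by $|A|^4$ gives $\Sigma \le (1 + \tfrac{1}{4} r^{-3}) |A|^4 |B|^2 / r^4$. Combining with the lower bound and cancelling the common factor $|A|^4 |B|^2/r^4$ reduces everything to
\[1 + \tfrac{1}{4} r^{-3} \ge (1 - 4\epsilon')\Big(1 + \tfrac{1}{(r-1)^3}\Big)\big(1 - O(|A|^{-1})\big),\]
which, expanded to leading order, becomes $1 + \tfrac{1}{4} r^{-3} \ge 1 + \tfrac{1}{2} r^{-3} + O(r^{-4})$, a contradiction for large $r$.

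\textbf{Main obstacle.} The delicate step is the bookkeeping in the final comparison. The slack $\tfrac{1}{2}r^{-3} - \tfrac{1}{4}r^{-3} = \tfrac{1}{4}r^{-3}$ must absorb every lower-order loss: the discrepancy $\tfrac{1}{(r-1)^3} - r^{-3} = O(r^{-4})$ produced by Lemma~\ref{nonRandom}, the $O(|A|^{-1})$ error in the Cauchy--Schwarz estimate for $\bar{\delta}$, and the $O(|A|^{-1})$ error from bounding the count of non-degenerate 4-tuples by $|A|^4$. The $O(r^{-4})$ correction built into $\epsilon'$ in the statement of the proposition is precisely the budget allocated for these contributions, and the argument closes provided $|A|$ is not much smaller than a suitable fixed power of $r$.
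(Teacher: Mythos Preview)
Your argument is essentially the paper's: both proceed by contradiction, bound $\sum \hom(C_4, H_{b_1}\cap H_{b_2})$ from above using the failure of the first alternative, from below via Jensen on the densities $\delta_{b_1 b_2}$ together with Lemma~\ref{nonRandom}, and compare. The only structural difference is the order of operations: the paper first compares the two sums to extract a \emph{single} pair $(b_1,b_2)$ with $\hom(C_4,H_{b_1}\cap H_{b_2})\le (1+\tfrac12 r^{-3}+O(r^{-4}))\delta_{b_1 b_2}^4|A|^4$ and then invokes Lemma~\ref{nonRandom} on that pair, whereas you apply Lemma~\ref{nonRandom} to every pair first and then sum. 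Your route is arguably cleaner, since it sidesteps the paper's auxiliary restriction to pairs with $\delta_{b_1 b_2}\ge r^{-2}$.

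One genuine slip to repair: you sum over all $(b_1,b_2)\in B^2$, but on the diagonal $b_1=b_2$ there is no vertical edge, so the map $a\mapsto d_{b_1 b_2,a}$ is undefined and $H_b\cap H_b=K_A$ is certainly not $r$-partite with small classes; Lemma~\ref{nonRandom} does not apply there. The fix is to sum over $b_1\neq b_2$, exactly as the paper does. This costs a factor $1-O(r/|B|)=1-O(r^{-4})$ in the lower bound on $\bar\delta$, which is harmless. Similarly, in your upper bound you should note that failure of~(1) only controls $n_{(x,y,z,w),\kappa}$ for tuples with four \emph{distinct} vertices; the $O(|A|^3)$ tuples with $x=z$ or $y=w$ need to be handled separately (or one reads the proposition as allowing homomorphic $C_4$'s, as the paper implicitly does). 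Finally, your closing caveat about needing $|A|$ at least a fixed power of $r$ is accurate and matches a dependence that is equally present, if less explicit, in the paper's own calculation; in the intended application $|A|$ is far larger than any polynomial in $r$, so this is not an obstacle.
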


\noindent\textbf{Remark.} We could in principle make this proposition more explicit by writing $20r^{-4}$ instead of $O(r^{-4})$, when $r \geq 100$, for example (the choices of constants 20 and 100 are somewhat arbitrary), but we opt not to do so, to make the exposition clearer.

\begin{proof}Suppose the contrary. First of all, for each  $\Big((a_1a_2, \kappa_1), (a_2a_3, \kappa_2), (a_3a_4, \kappa_3), (a_4a_1, \kappa_4)\Big)$, we get at most $\Big(1 + \frac{1}{4r^3}\Big)\frac{1}{r^4}|B|$ of $b \in B$ that contain the given pattern in $b$\textsuperscript{th} row. Hence, (writing $a_5 = a_1$)
\[\begin{split}\sum_{\substack{b_1, b_2 \in B\\b_1 \not= b_2}} \hom(C_4, H_{b_1} \cap H_{b_2}) \leq & \sum_{a_1, a_2, a_3, a_4 \in A} \sum_{\kappa_1, \kappa_2, \kappa_3, \kappa_4 \in [r]} \Big|\Big\{b \in B \colon (\forall i \in [4])\chi((a_i, b)(a_{i+1},b)) = \kappa_i\Big\}\Big|^2\\
\leq &r^4 |A|^4 \Big(1 + \frac{1}{4r^3}\Big)^2\frac{1}{r^8}|B|^2 = \Big(1 + \frac{1}{4r^3}\Big)^2 \frac{1}{r^4}|A|^4|B|^2.\end{split}\]
For $b_1,b_2 \in B$, let $\delta_{b_1,b_2}$ stand for the density of $H_{b_1} \cap H_{b_2}$, i.e. $\delta_{b_1,b_2} = \frac{2e(H_{b_1} \cap H_{b_2})}{|A|^2}$. Notice that
\[\begin{split}\sum_{\substack{b_1, b_2 \in B\\b_1 \not= b_2}} \delta_{b_1,b_2}^4 |A|^4 = &16|A|^{-4} \sum_{\substack{b_1, b_2 \in B\\b_1 \not= b_2}} e(H_{b_1} \cap H_{b_2})^4 \\
\geq& 16|A|^{-4}|B|^{-6} \Big(\sum_{\substack{b_1, b_2 \in B\\b_1 \not= b_2}} e(H_{b_1} \cap H_{b_2})\Big)^4 \\
\geq& 16|A|^{-4}|B|^{-6} \Big(1 - r|B|^{-1}\Big)^4\Big(\sum_{b_1, b_2 \in B} e(H_{b_1} \cap H_{b_2})\Big)^4,\end{split}\] 
where the last inequality follows from easy bounds $\sum_{b \in B} e(H_b) \leq |B|\binom{|A|}{2}$ and $\sum_{b_1,b_2}e(H_{b_1} \cap H_{b_2}) \geq \frac{1}{r}|B|^2\binom{|A|}{2}$. Note that, since $r|B|^{-1} \leq 1$, we also have $\Big(1 - r|B|^{-1}\Big)^4 \geq 1 - 4r|B|^{-1}$. We proceed further
\[\begin{split}\sum_{\substack{b_1, b_2 \in B\\b_1 \not= b_2}} \delta_{b_1,b_2}^4 |A|^4\geq& \Big(1 - 4r|B|^{-1}\Big)|A|^{-4}|B|^{-6}\bigg(\sum_{a_1, a_2 \in A} \sum_{\kappa \in [r]} \Big(\sum_{b \in B} \bm{1}(\chi((a_1,b)(a_2,b)) = \kappa)\Big)^2\bigg)^4\\
\geq &\Big(1 - 4r|B|^{-1}\Big)r^{-4}|A|^{-12}|B|^{-6}\Big(\sum_{a_1, a_2 \in A} \sum_{\kappa \in [r]} \sum_{b \in B} \bm{1}(\chi((a_1,b)(a_2,b)) = \kappa)\Big)^8\\
= &\Big(1 - 4r|B|^{-1}\Big)r^{-4} |A|^4|B|^2.\end{split}\]
In particular,
\[\sum_{\substack{b_1, b_2 \in B\\b_1 \not= b_2\\\delta_{b_1,b_2}\geq \frac{1}{r^2}}} \delta_{b_1,b_2}^4 |A|^4 \geq \Big(1 - 4r|B|^{-1} - r^{-4}\Big)r^{-4} |A|^4|B|^2,\]
so we get
\[\sum_{\substack{b_1, b_2 \in B\\b_1 \not= b_2\\\delta_{b_1,b_2}\geq \frac{1}{r^2}}} \hom(C_4, H_{b_1} \cap H_{b_2}) \leq  \Big(1 + \frac{1}{4r^3}\Big)^2\Big(1 - 4r|B|^{-1} - r^{-4}\Big)^{-1} \sum_{\substack{b_1, b_2 \in B\\b_1 \not= b_2\\\delta_{b_1,b_2}\geq \frac{1}{r^2}}} \delta_{b_1,b_2}^4 |A|^4.\]
Since $|B| \geq r^5$ we deduce
\[\sum_{\substack{b_1, b_2 \in B\\b_1 \not= b_2\\\delta_{b_1,b_2}\geq \frac{1}{r^2}}} \hom(C_4, H_{b_1} \cap H_{b_2}) \leq \Big(1 + \frac{1}{2}r^{-3} + O(r^{-4})\Big)\sum_{\substack{b_1, b_2 \in B\\b_1 \not= b_2\\\delta_{b_1,b_2}\geq \frac{1}{r^2}}} \delta_{b_1,b_2}^4 |A|^4\]
We may thus find distinct $b_1, b_2 \in B$ such that $\delta_{b_1,b_2} \geq r^{-2}$ and $\hom(C_4, H_{b_1} \cap H_{b_2}) \leq \Big(1 + \frac{1}{2}r^{-3} + O(r^{-4})\Big) \delta_{b_1,b_2}^4 |A|^4$. Lemma~\ref{nonRandom} applies to give a vertex class of $H_{b_1} \cap H_{b_2}$ of size at least $\Big(1 + \frac{1}{8}r^{-3} + O(r^{-4})\Big) r^{-1}N$. But, this corresponds to $(b_1b_2, \kappa)$ appearing in that many columns for a suitable $\kappa \in [r]$, which is a contradiction to the initial assumptions.\end{proof}

We are now ready to prove the main result. 

\begin{proof}[Proof of Theorem~\ref{mainThm}]Suppose that $\chi \colon E(K_N \times K_N) \to [r]$ is an $r$-edge-colouring without an alterating rectangle. Our goal is to show that $N \leq \Big(1 - \frac{1}{128}r^{-2}(1  - o(1))\Big)r^{\binom{r+1}{2}}$. We iteratively apply Proposition~\ref{dichotomy}. For $i \in \Big[0, \Big\lfloor\frac{1}{8}r\Big\rfloor\Big]$, unless we have already deduced the desired bound on $N$, we show that there are a set of columns $A_i \subset [N]$, a set of rows $B_i \subset [N]$, a set of $4i$ coloured horizontal edges $E^{\text{hor}}_i \subset [N]^{(2)} \times [r]$, a set of $i$ coloured vertical edges $E^{\text{ver}}_i \subset [N]^{(2)} \times [r]$ and an integer $J_i \in [0,i]$ with the following properties:
\begin{itemize}
\item[$\bullet$] vertices of $E^{\text{hor}}_i$ are disjoint from $A_i$, vertices of $E^{\text{ver}}_i$ are disjoint from $B_i$,
\item[$\bullet$] for each $(a_1a_2, \kappa) \in E^{\text{hor}}_i$ and each $b\in B_i$, we have $\chi\Big((a_1,b)(a_2,b)\Big) = \kappa$,  for each $(b_1b_2, \kappa) \in E^{\text{ver}}_i$ and each $a\in A_i$, we have $\chi\Big((a,b_1)(a,b_2)\Big) = \kappa$,
\item[$\bullet$] $|E^{\text{hor}}_i| = 4(i  -J_i)$ and $|E^{\text{ver}}_i| = J_i$, and finally
\item[$\bullet$] $|A_i| \geq \Big(1 + \frac{1}{8}r^{-3} + O(r^{-4})\Big)^{J_i} r^{-J_i}N - 4(i-J_i)$, $|B_i| \geq \Big(1 + \frac{1}{4r^3}\Big)^{i-J_i}\frac{1}{r^{4(i-J_i)}}N - 2J_i$.
\end{itemize}
We prove this by induction on $i$, with the base case $i=0$ being trivial, where we may take $A_i = [N], B_i = [N], E^{\text{hor}}_i = \emptyset, E^{\text{ver}}_i = \emptyset, J_i = 0$. Suppose now that the claim holds for some $i \in \Big[0, \Big\lfloor\frac{1}{8}r\Big\rfloor\Big)$ and let $A_i, B_i, E^{\text{hor}}_i, E^{\text{ver}}_i, J_i$ the relevant sets and integer for $i$. Apply Proposition~\ref{dichotomy} to $\restr{\chi}{K_{A_i} \times K_{B_i}}$. We now discuss the two possible outcomes.\\
\indent Suppose that there is an edge-coloured $C_4$ $\Big((a_1a_2, \kappa_1),$ $(a_2a_3, \kappa_2),$ $(a_3a_4, \kappa_3),$ $(a_4a_1, \kappa_4)\Big)$, with $a_1,$ $a_2,$ $a_3,$ $a_4 \in A_i$ such that for at least $\Big(1 + \frac{1}{4r^3}\Big)\frac{1}{r^4}|B_i|$ of $b \in B_i$ $b$\textsuperscript{th} row contains this pattern. Then, we define 
\[\begin{split} A_{i+1} &= A_i \setminus \{a_1, a_2, a_3, a_4\},\\
E^{\text{hor}}_{i+1} &= E^{\text{hor}}_i \cup \Big\{(a_1a_2, \kappa_1), (a_2a_3, \kappa_2), (a_3a_4, \kappa_3), (a_4a_1, \kappa_4)\Big\},\\
B_{i+1} &= \Big\{b \in B_i \colon (\forall i \in [4])\hspace{2pt}\chi\Big((a_i, b)(a_{i+1}, b)\Big) = \kappa_i\Big\},\hspace{12pt}(\text{where }a_5 = a_1)\\
E^{\text{ver}}_{i+1} &= E^{\text{ver}}_i,\\
J_{i+1} &= J_i.\end{split}\]
Otherwise, there is a coloured edge $(b_1b_2, \kappa)$ such that for at least $\Big(1 + \frac{1}{8}r^{-3} + O(r^{-4})\Big)\frac{1}{r}|A_i|$ of $a \in A_i$ $a$\textsuperscript{th} column contain this pattern. In this case, we define
\[\begin{split} A_{i+1} &= \Big\{a \in A_i \colon \chi\Big((a, b_1)(a, b_2)\Big) = \kappa\Big\},\\
E^{\text{hor}}_{i+1} &= E^{\text{hor}}_i,\\
B_{i+1} &= B_i \setminus \{b_1, b_2\},\\
E^{\text{ver}}_{i+1} &= E^{\text{ver}}_i \cup \Big\{(b_1b_2, \kappa)\Big\},\\
J_{i+1} &= J_i + 1.\end{split}\]
It is easy to check that in both cases the given sets and integer satisfy the desired properties. Notice that this procedure can be completed for $i \leq \Big\lfloor\frac{1}{8}r\Big\rfloor$, as long the condition $|B_i| \geq r^5$ of Proposition~\ref{dichotomy} is satisfied. On the other hand, if that fails, then we certainly have the desired bound on $N$ for large enough $r$.\\
\indent To finish the proof, we look at the cases depending on $J_i$. Let $s = \Big\lfloor\frac{1}{8}r\Big\rfloor$.\\

\noindent\textbf{Case 1: }$J_s \geq s/2$\textbf{.} Let $V\subset [N]$ be any set of size $r+1$ that contains all vertices of edges in $E^{\text{ver}}_s$. List all edges in $V^{(2)} \setminus E^{\text{ver}}_s$ as $e_1, \dots, e_m$, where $m = \binom{r+1}{2} - J_s$. By pigeonhole principle, we may find colours $\kappa_1, \dots, \kappa_m \in [r]$ such that for at least $r^{-m}|A_s|$ of $a \in A_s$ we have colour $\kappa_i$ on edge $e_i$ in the column $a$. However, if we get at least two such $a_1, a_2 \in A_s$, then $\chi$ in these two columns coincides on $K_V$, which gives an alternating rectangle since $|V| = r  +1 $, as in Shelah's proof, resulting in a contradiction. Therefore,
\[\begin{split}1 \geq r^{-m}|A_s| \hspace{2pt}\geq\hspace{2pt}& r^{-\binom{r+1}{2} + J_s} \bigg(\Big(1 + \frac{1}{8}r^{-3} + O(r^{-4})\Big)^{J_s} r^{-J_s}N - 4r\bigg) \\
\geq\hspace{2pt}& \bigg(\Big(1 + \frac{1}{8}r^{-3} + O(r^{-4})\Big)^{\frac{r}{16}-1} r^{-\binom{r+1}{2}}N - 4r^{-\binom{r+1}{2} + r/8 + 2}\bigg),\end{split}\]
completing the proof in this case.\\

\noindent\textbf{Case 2: }$J_s < s/2$\textbf{.} This is very similar to the previous case, except that we now reverse the roles of rows and columns. Let $V\subset [N]$ be any set of size $r+1$ that contains all vertices of edges in $E^{\text{hor}}_s$. List all edges in $V^{(2)} \setminus E^{\text{hor}}_s$ as $e_1, \dots, e_m$, where $m = \binom{r+1}{2} - 4(s - J_s)$. By pigeonhole principle, we may find colours $\kappa_1, \dots, \kappa_m \in [r]$ such that for at least $r^{-m}|B_s|$ of $b \in B_s$ we have colour $\kappa_i$ on edge $e_i$ in the row $b$. However, if we get at least two such $b_1, b_2 \in B_s$, then $\chi$ in these two rows coincides on $K_V$, which once again gives an alternating rectangle, resulting in a contradiction. Therefore,
\[\begin{split}1 \geq r^{-m}|B_s| \hspace{2pt}\geq\hspace{2pt}& r^{-\binom{r+1}{2} + 4(s -J_s)} \bigg(\Big(1 + \frac{1}{4r^3}\Big)^{s-J_s}\frac{1}{r^{4(s-J_s)}}N - 2r\bigg)\\ \hspace{2pt}\geq\hspace{2pt} &\bigg(\Big(1 + \frac{1}{4r^3}\Big)^{\frac{r}{16} - 1}r^{-\binom{r+1}{2}}N - 2r^{4r + 1-\binom{r+1}{2}}\bigg),\end{split}\]
completing the proof.\end{proof}

\thebibliography{9}
\bibitem{gridRamsey} D. Conlon, J. Fox, C. Lee and B. Sudakov, On the grid Ramsey problem and related questions, \emph{Int. Math. Res. Not.} \textbf{17} (2015), 8052--8084.
\bibitem{Shelah} S. Shelah, Primitive recursive bounds for van der Waerden numbers, \emph{J. Amer. Math. Soc.} \textbf{1} (1989), 683--697.
\bibitem{GRS} R.L. Graham, B.L. Rothschild and J.H. Spencer, \textbf{Ramsey theory}, second ed., Wiley Interscience Series in Discrete Mathematics and Optimization, John Wiley \& Sons Inc., New York, 1990.
\bibitem{Corsten} J. Corsten, A note on the grid Ramsey problem, \emph{Electronic Notes in Discrete Mathematics} \textbf{61} (2017), 287--292.
\bibitem{Gyarfas}  A. Gy\'arf\'as, On a Ramsey type problem of Shelah, Extremal problems for finite sets. Conference. Visegrád, 1991. (Bolyai Society mathematical studies 3.) (1994), 283--287.
\bibitem{CGW} F.R.K. Chung, R.L. Graham, R.M. Wilson, Quasi-random graphs, \emph{Combinatorica} \textbf{9} (1989), 345--362.
\bibitem{ThomasonQR} A. Thomason, Pseudorandom graphs, in \emph{Random Graphs} ’85 (Pozn\'an, 1985), \emph{NorthHolland Math. Stud.} \textbf{144}, North-Holland, Amsterdam-New York, (1987), 307--331.
\bibitem{ThomasonRamsey} A. Thomason, An upper bound for some Ramsey numbers, \emph{J. Graph Theory} \textbf{12} (1988), 509--517.
\bibitem{ConlonRamsey} D. Conlon, A new upper bound for diagonal Ramsey numbers, \emph{Ann. of Math.} \textbf{170} (2009), 941--960. 
\end{document}